\definecolor{mygreen}{RGB}{55, 184, 2}
\newtheorem{theorem}{Theorem}[section]
\newtheorem{corollary}[theorem]{Corollary}
\newtheorem{lemma}[theorem]{Lemma}
\newtheorem{conjecture}[theorem]{Conjecture}
\newtheorem{definition}[theorem]{Definition}
\newcommand{\ceil}[1]{\left\lceil #1 \right\rceil}
\newcommand{\floor}[1]{\left\lfloor #1 \right\rfloor}
\begin{document}

\title{Cop number of partial cubes}

\author{Nicholas Crawford \footnote{\href{mailto:nicholas.2.crawford@ucdenver.edu}{nicholas.2.crawford@ucdenver.edu}, Mathematical and Statistical Sciences, Universtiy of Colorado Denver, USA}
\and Vesna Ir\v si\v c Chenoweth \footnote{\href{mailto:vesna.irsic@fmf.uni-lj.si}{vesna.irsic@fmf.uni-lj.si}, Faculty of Mathematics and Physics, University of Ljubljana, Slovenia}}

\maketitle

\begin{abstract}
The game of Cops and Robbers on graphs is a well-studied pursuit--evasion model whose central parameter, the cop number, captures the minimum number of pursuers required to guarantee capture of an adversary on a given graph. While the cop number has been determined for many classical graph families, relatively little is known about the important class of partial cubes, i.e., isometric subgraphs of hypercubes.  

In this paper, we establish a lower bound for the cop number of partial cubes and present an upper bound on a subclass of partial cubes. Additionally, we improve these bounds for a particular family of partial cubes: Fibonacci cubes. These graphs are defined as induced subgraphs of hypercubes obtained by forbidding consecutive ones in binary strings. Beyond their natural combinatorial interest, Fibonacci cubes have connections to chemical graph theory, where they serve as models for resonance graphs of certain classes of polycyclic aromatic hydrocarbons. 

\end{abstract}

\noindent
{\bf Keywords:} cop number; partial cubes; median graphs; Fibonacci cubes

\noindent
{\bf AMS Subj.\ Class.\ (2020):} 05C57, 05C12

\section{Introduction}
\label{sec:intro}

The pursuit--evasion game \emph{Cops and Robbers} was independently introduced by Quilliot \cite{Quilliot1983} and by Nowakowski \& Winkler \cite{NowakowskiWinkler1983} in the 1980s, with Aigner \& Fromme \cite{AignerFromme1984} later formalizing the notion of \emph{cop number} $c(G)$ of a graph $G$ in 1984.

One of the main research directions in the area is to obtain good general upper bounds for the cop number. The still open conjectures of Meyniel \cite{maamoun-1987} and Schroder \cite{schroeder-2001} consider upper bounds in terms of the order and the genus of a graph, respectively. Another direction receiving a lot of interest is determining the cop number for specific graph families \cite{MR1623932, MR3647488, MR3997485, MR4313187, MR4285899, MR4520060, MR4708898}. Despite substantial progress in studying the cop number on classical graph families such as trees, planar graphs, Cartesian products, and hypercubes, its behavior on the important class of \emph{partial cubes} remains unknown. 

Partial cubes are isometric subgraphs of hypercubes. Among these, \emph{Fibonacci cubes} and \emph{Lucas cubes} are the most popular due to their recursive structures: Fibonacci cubes forbid consecutive 1's in binary strings, and Lucas cubes impose additional cyclic adjacency constraints. For properties of Fibonacci and Lucas cubes, see a recent monograph \cite{book}.

One of the applications of Fibonacci cubes is their relation to chemical graph theory. Fibonacci cubes are known to model \emph{resonance graphs} of linear polycyclic aromatic hydrocarbons, structures where vertices represent perfect matchings and edges are obtained by face rotations, making Fibonacci cubes especially prominent in modeling molecular resonance networks \cite{kz-05}. 
Lucas cubes analogously represent closed-loop or cyclic molecular configurations, further cementing their relevance in molecular graph modeling \cite{zigert-2013, yao-2015}.

Nevertheless, explicit results on the standard cop number of Fibonacci and Lucas cubes, or pursuit--evasion dynamics tailored to their unique structures, have not yet been published. In this paper, we initiate the study of the cop number of partial cubes. We provide a general lower bound for the cop number of partial cubes and present sharpness examples. We also provide a sharp upper bound for the cop number of median graphs, which are an important subclass of partial cubes, and conjecture that the same bound holds for all partial cubes. By applying the above bounds and more elaborate additional techniques, we are able to prove that the cop number of the $n$-dimensional Fibonacci and Lucas cube lies between $\lfloor \frac{n+5}{6} \rfloor$ and $\lceil \frac{n}{4} \rceil$.

\section{Preliminaries}
\label{sec:prelim}

In this section, we present key definitions that will be used throughout the paper. These include graph-theoretic concepts relevant to pursuit-evasion games, graph embeddings, and particular graph families such as Fibonacci and Lucas cubes.

We begin with the central notion in the game of Cops and Robbers.

\begin{definition}[Cop Number]
The \emph{cop number} of a graph $G$, denoted $c(G)$, is the minimum number of cops required to guarantee the capture of a robber in the game of Cops and Robbers played on $G$.

The game proceeds as follows:
\begin{itemize}
    \item The game is played on a finite, simple, undirected graph $G$.
    \item Initially, $k$ cops choose their starting vertices, followed by the robber choosing a starting vertex.
    \item The players move in alternate rounds. In each round, all cops move first, then the robber.
    \item Each player may move to a neighboring vertex or remain in place.
    \item The cops win if any cop occupies the same vertex as the robber. The robber wins if he can evade capture indefinitely.
\end{itemize}
\end{definition}

Recall that the \emph{$n$-dimensional hypercube}, denoted $Q_n$, is the graph whose vertex set consists of all binary strings of length $n$, i.e., $V(Q_n) = \{0,1\}^n$, where two vertices are adjacent if and only if their strings differ in exactly one bit position. 

To define partial cubes and their subclass (median graphs) we need the notion of isometric subgraphs, i.e.\ subgraphs that maintain shortest-path distances. We formally define this as follows. 
Let $G$ be a graph and $H$ an induced subgraph of $G$. The subgraph $H$ is called an \emph{isometric subgraph} of $G$ if, for every pair of vertices $u, v \in V(H)$,
\[
d_H(u, v) = d_G(u, v),
\]
where $d_H$ and $d_G$ denote the shortest-path distances in $H$ and $G$, respectively. 

\begin{definition}[Partial Cube]
    A \emph{partial cube} is a connected isometric subgraph of a hypercube.
\end{definition}

\begin{definition}[Median Graph]
A graph $G$ is a \emph{median graph} if, for every triple of vertices $u, v, w \in V(G)$, there exists a unique vertex $m \in V(G)$, called the \emph{median}, such that:
\[
d(u, m) + d(v, m) = d(u, v), \quad
d(u, m) + d(w, m) = d(u, w), \quad
d(v, m) + d(w, m) = d(v, w).
\]

Equivalently, the vertex $m$ lies on shortest paths between each pair of $u$, $v$, and $w$.
\end{definition}

Recall that every median graph is a partial cube \cite[Theorem 5.75]{ovchinnikov-2011}. 

We next define a notion of subgraph that admits a distance-non-increasing projection from the ambient graph. Let $G$ be a graph and $H$ an induced subgraph of $G$. We say that $H$ is a \emph{retract} of $G$ if there exists a graph homomorphism $r : V(G) \to V(H)$, called a \emph{retraction}, such that:
    $r(v) = v$ for all $v \in V(H)$ (i.e., $r$ fixes $H$ pointwise), and 
    if $uv \in E(G)$, then either $r(u) = r(v)$ or $r(u)r(v) \in E(H)$.

Two of the most famous examples of partial cubes are \emph{Fibonacci cubes} and \emph{Lucas Cubes}. 

\begin{definition}[Fibonacci Cube]
The \emph{$n$-dimensional Fibonacci cube}, denoted $\Gamma_n$, is the subgraph of the $n$-dimensional hypercube $Q_n$ induced by all binary strings of length $n$ that do not contain two consecutive $1's$. Formally,
\[
V(\Gamma_n) = \left\{ x \in \{0,1\}^n : \text{$x$ does not contain the substring ``11''} \right\},
\]
where two vertices are adjacent if their corresponding strings differ in exactly one bit.
\end{definition}

\begin{definition}[Lucas Cube]
The \emph{$n$-dimensional Lucas cube}, denoted $\Lambda_n$, is the subgraph of the $n$-dimensional hypercube $Q_n$ induced by all binary strings $x = x_1x_2\cdots x_n \in \{0,1\}^n$ satisfying:
\begin{itemize}
    \item $x$ does not contain the substring ``11'',
    \item $x_1 = 1$ and $x_n = 1$ do not both hold (i.e., $x$ does not start and end with 1).
\end{itemize}
Vertices are adjacent if their strings differ in exactly one bit.
\end{definition}

We conclude this section with a few standard definitions. Let $G$ be a graph. The \emph{minimum degree} of $G$, denoted $\delta(G)$, is the smallest degree of any vertex in $G$, i.e.,
\[
\delta(G) = \min_{v \in V(G)} \deg(v),
\]
where $\deg(v)$ is the number of neighbors of $v$.

A subset $D \subseteq V(G)$ is a \emph{dominating set} of a graph $G$ if every vertex $v \in V(G) \setminus D$ has at least one neighbor in $D$. That is, for all $v \in V(G)$, either $v \in D$ or there exists $u \in D$ such that $uv \in E(G)$.
The \emph{domination number} of $G$, denoted $\gamma(G)$, is the minimum cardinality of a dominating set in $G$.

\section{Lower bound}
\label{sec:lower}
We begin with a lower bound on the cop number, $c(G)$, in relation to the minimum degree of the graph, $\delta(G)$. We first state a technical lemma that is needed for the main result of this section.

\begin{lemma}
    \label{lem:at most 2 common neighbors}
    If $G$ is a partial cube, $x, y \in V(G)$ and $d(x,y) = 2$, then $x$ and $y$ have at most two common neighbors.
\end{lemma}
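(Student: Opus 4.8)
The plan is to use the defining property of partial cubes via the Djoković–Winkler relation: edges of a partial cube are naturally partitioned into \emph{$\Theta$-classes}, and in an isometric embedding into a hypercube each $\Theta$-class corresponds to a single coordinate direction. Equivalently, after fixing an isometric labeling $\ell : V(G) \to \{0,1\}^m$, two adjacent vertices differ in exactly one coordinate, and the coordinate in which they differ is constant along a $\Theta$-class. So first I would fix such a labeling and reduce the statement to a purely combinatorial claim about binary strings: if $\ell(x)$ and $\ell(y)$ are at Hamming distance $2$, differing in coordinates $i$ and $j$, then any common neighbor $z$ must agree with both $x$ and $y$ outside $\{i,j\}$ (since $z$ is at distance $1$ from each, and $d(x,y)=2$ forces $z$ to lie on a geodesic, hence to differ from $x$ only in $\{i,j\}$). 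Thus $\ell(z)$ is determined by its values in coordinates $i$ and $j$, and the constraint $d(z,x)=d(z,y)=1$ leaves only the two "mixed" choices: $z$ agrees with $x$ in coordinate $i$ and with $y$ in coordinate $j$, or vice versa. That already gives at most two candidate common neighbors.

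The main step — and the place where the partial cube hypothesis (as opposed to arbitrary graphs) really does the work — is justifying that a common neighbor $z$ of $x$ and $y$ must satisfy $d(z,x) = d(z,y) = 1$ \emph{and} lie on a geodesic between $x$ and $y$. Since $G$ is an isometric subgraph of a hypercube, $d_G(x,y) = d_G(x,z) + d_G(z,y)$ or a triangle is formed; but $d(x,y) = 2$ and $d(x,z) = d(y,z) = 1$, so $z$ is automatically a geodesic midpoint, and isometry transfers this to the hypercube distance, pinning down $\ell(z)$ in the coordinates where $\ell(x) = \ell(y)$. I would spell this out carefully, since it is the crux: in a general graph two vertices at distance $2$ can have many common neighbors, and it is precisely the absence of such "extra" structure in hypercubes (each coordinate is free, so once the geodesic constraint fixes all coordinates outside $\{i,j\}$, only two assignments remain) that bounds the count by two.

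I expect the routine part to be the bookkeeping with coordinate vectors, and the only subtlety to be making sure the argument is phrased in terms of $G$'s own distances and then invoked via isometry, rather than implicitly working in $Q_m$ throughout. An alternative, perhaps cleaner, route avoids an explicit embedding: use the characterization that in a partial cube, for any edge $uv$, the sets $W_{uv} = \{w : d(w,u) < d(w,v)\}$ and $W_{vu}$ partition $V(G)$, and a common neighbor $z$ of $x,y$ at distance $2$ must cross exactly the same two $\Theta$-classes as a geodesic $x\text{–}z'\text{–}y$; counting how $z$ can sit relative to these two classes again yields at most two. Either way the heart of the proof is the same dichotomy, so I would present whichever is shorter, likely the embedding version since the labeling makes the "two mixed choices" completely transparent.
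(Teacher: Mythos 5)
Your proposal is correct and follows essentially the same route as the paper: fix the isometric binary labeling, note that $x$ and $y$ differ in exactly two coordinates $i,j$, and observe that any common neighbor must differ from $x$ in one of these two coordinates only, leaving at most two candidates. The extra discussion of geodesics and $\Theta$-classes is sound but not needed beyond this coordinate bookkeeping, which is exactly the paper's argument.
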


\begin{proof}
    Let $x$ and $y$ be as in the statement of the lemma. As they are at distance 2, they differ in exactly two bits, say in bits $i$ and $j$. Thus, every common neighbor of $x$ and $y$ can also differ from $x$ and $y$ only in bits $i$ and $j$. Thus, there are at most two common neighbors, one is obtained by changing bit $i$ in $x$ and the other by changing bit $j$ in $x$.
\end{proof}

\begin{theorem}
    \label{thm:lower}
    If $G$ is a partial cube, then $c(G) \geq \left \lceil \frac{\delta(G)}{2} \right \rceil$.
\end{theorem}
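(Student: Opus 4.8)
The plan is to use a classical robber-evasion strategy of the kind used to show $c(G)\ge \delta(G)/2$ for graphs of girth at least $5$ (Aigner--Fromme), but adapted to partial cubes, where the obstruction to large girth is precisely the $4$-cycles. The idea is that a robber standing at a vertex $r$ can be ``threatened'' by a cop only if that cop is at distance at most $1$ from $r$, and when the cops move, the robber needs an escape neighbor of $r$ that is not dominated by any cop's new position. So suppose for contradiction that only $k \le \lceil \delta(G)/2\rceil - 1$ cops are playing, equivalently $2k < \delta(G)$, i.e.\ $\delta(G)\ge 2k+1$. I will show the robber can always move to (or stay at) a safe vertex.

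First I would set up the invariant: at the start of the robber's turn, the robber occupies a vertex $r$ such that no cop occupies $r$ and no cop is adjacent to $r$ (a ``safe'' position in the strong sense). For the first move, the robber picks any vertex and, since $\delta(G)\ge 2k+1 > k$, there is room to find such a start (in fact any vertex far from all cops works initially; one must be slightly careful but the degree bound gives plenty of slack). For the inductive step, assume before the cops' move the robber is safe at $r$. The cops then move; each cop, from its old position, can reach a set of vertices, but the key point is how many neighbors of $r$ a single cop can come to dominate or occupy. A cop currently at distance $\ge 2$ from $r$ cannot, in one move, become adjacent to $r$ unless it was at distance exactly $2$; and then it moves to a common neighbor of itself and $r$. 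By Lemma~\ref{lem:at most 2 common neighbors}, a vertex at distance $2$ from $r$ has at most two common neighbors with $r$, so such a cop can ``kill'' at most two neighbors of $r$. A cop already adjacent to $r$ — but by the invariant there were none — and a cop at $r$ — none either. Actually, by the invariant all cops start at distance $\ge 2$ from $r$, so each cop can occupy or become adjacent to at most $2$ neighbors of $r$ after its move (the at most two common neighbors from Lemma~\ref{lem:at most 2 common neighbors}); additionally a cop could move onto $r$ itself only from an adjacent vertex, which is excluded. Hence after the cops move, at most $2k$ neighbors of $r$ are either occupied by a cop or adjacent to a cop. Wait — I also need that $r$ itself is not occupied: no cop was adjacent to $r$, so no cop can step onto $r$. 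Thus $r$ remains unoccupied, and among the $\ge 2k+1$ neighbors of $r$, at most $2k$ are ``bad''. Therefore some neighbor $r'$ of $r$ is not occupied by any cop and not adjacent to any cop: the robber moves to $r'$, restoring the invariant. Since the invariant is maintained forever, the robber is never caught, so $k$ cops do not suffice, giving $c(G)\ge \lceil \delta(G)/2\rceil$.

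The main obstacle I anticipate is the careful bookkeeping in the inductive step, in particular making precise the claim that a single cop, starting at distance $\ge 2$ from $r$, can render at most two neighbors of $r$ ``bad'' (occupied-or-adjacent-to-a-cop) in one move. The cop's new position $c'$ is adjacent to at most the set $N[c']$, and the neighbors of $r$ that become bad are exactly $N[c']\cap N(r)$; if $c'=r$ this is impossible by the invariant, if $d(r,c')=1$ then $c'$ itself is a bad neighbor and also $c'$'s other neighbors in $N(r)$ are bad — here I need that $c'$ and $r$ have at most two common neighbors, so $|N[c']\cap N(r)|\le 1 + 2 = 3$, which is one too many. This forces a refinement: I should strengthen the invariant to ``the robber is at $r$ and every cop is at distance $\ge 2$ from $r$'' but track more carefully, or instead maintain that the robber sits at $r$ and no cop is \emph{within distance $1$} while using that a cop needs two moves — one to get adjacent to $r$, registering as a threat, and only then to capture. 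The cleanest fix is: call a neighbor $r'$ of $r$ \emph{blocked} if some cop is at $r'$ or at distance $1$ from $r'$; a cop at distance $\ge 2$ from $r$, after one move, lies at distance $\ge 1$ from $r$, so it blocks the neighbor(s) of $r$ that are within distance $1$ of its new position, and $N(r)\cap N[c']$ — if $d(r,c')\ge 2$ this is empty, if $d(r,c')=1$ then by Lemma~\ref{lem:at most 2 common neighbors} applied to $r$ and the appropriate distance-$2$ vertex one bounds this by $2$. I will present the argument with the precise invariant ``all cops at distance $\ge 2$ from the robber'' and verify that each cop can block at most $2$ of the $\ge 2k+1$ neighbors, leaving a safe move; this matches the $2k+1 \le \delta(G)$ threshold exactly and yields the stated bound.

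I would also double-check the ceiling: if $\delta(G)$ is even, $\delta=2k+2$ forces $k+1$ cops i.e.\ $c(G)\ge k+1 = \delta/2 = \lceil\delta/2\rceil$; if $\delta$ is odd, $\delta = 2k+1$ forces $c(G)\ge k+1 = \lceil \delta/2\rceil$. So in both cases the contradiction arises as soon as the number of cops is at most $\lceil\delta(G)/2\rceil - 1$, which is what we want.
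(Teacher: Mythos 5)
Your overall strategy is the same as the paper's: keep the robber at distance at least $2$ from every cop and use Lemma~\ref{lem:at most 2 common neighbors} to argue that $k\le\lceil\delta(G)/2\rceil-1$ cops can block at most $2k<\delta(G)$ of the robber's neighbors. However, two steps are genuinely missing. First, the starting position. You assert that ``the degree bound gives plenty of slack'' to find a start at distance at least $2$ from all cops, i.e.\ that $k$ cops never form a dominating set. Minimum degree alone does not give this in general graphs (complete bipartite graphs have large minimum degree and domination number $2$), so the claim really needs the partial cube structure and a separate argument. The paper supplies one: if the cops' set $C$ were dominating, pick $u\notin C$, split $N(u)$ into $X=N(u)\cap C$ and $Y=N(u)\setminus X$; triangle-freeness shows no vertex of $Y$ is dominated from $X$, and Lemma~\ref{lem:at most 2 common neighbors} shows each vertex of $C\setminus X$ dominates at most two vertices of $Y$, giving $|C|\ge |X|+|Y|/2\ge (|X|+\delta)/2$, which contradicts $|C|\le(\delta-1)/2$. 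Nothing in your proposal replaces this step.

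Second, the per-cop bound in the maintenance step is left unresolved in your write-up. You correctly spot that if a cop's new position $c'$ is adjacent to $r$, the naive count $|N[c']\cap N(r)|\le 1+2=3$ is ``one too many,'' but your proposed fix misstates the cases: for $d(r,c')=2$ the set $N(r)\cap N(c')$ is \emph{not} empty in general (it is precisely the set of at most two common neighbors, which is where the lemma is actually applied), while for $d(r,c')=1$ the correct bound is $1$, and the lemma is not the relevant tool there. The fact that resolves your worry --- and which you never invoke --- is that partial cubes are bipartite, hence triangle-free: a cop adjacent to the robber has no common neighbor with him, so it blocks only its own vertex; a cop at distance $2$ blocks at most its two common neighbors with $r$; a cop farther away blocks nothing. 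With that, each cop blocks at most $2$ neighbors of $r$, so at most $2k\le\delta(G)-1$ neighbors in total are blocked, the robber can restore the distance-at-least-$2$ invariant, and no cop can capture during its move since all cops start that move at distance at least $2$. So the skeleton matches the paper's proof, but as written both the start-up argument and the key counting are incomplete and need exactly the triangle-freeness plus Lemma~\ref{lem:at most 2 common neighbors} bookkeeping that the paper carries out.
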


\begin{proof}
    Let $\delta(G) = d$ and consider the game on $G$ with $k = \left \lceil \frac{d}{2} \right \rceil - 1 \leq \frac{d-1}{2}$ cops. We first show that the robber can always select a starting vertex that is at distance at least 2 from each of the cops.

    Let the cops' starting positions be $c_1^0, \ldots, c_k^0$ and let $C = \{ c_1^0, \ldots, c_k^0 \}$. If $C$ is not a dominating set in $G$, then there exists a vertex $x \in V(G)$ such that $d(x, C) \geq 2$ and the robber starts the game in $x$. Otherwise, suppose that $C$ is a dominating set of $G$. As $|C| = k < d+1 \leq |V(G)|$, there is a vertex $u \in V(G) \setminus C$. Let $N(u) \cap C = X$ and let $N(u) \setminus X = Y$. Clearly, $\deg(u) = |X| + |Y| \geq d$. As $G$ is a partial cube, it has no triangles, so there are no edges between $X$ and $Y$. But as $C$ is dominating, every vertex from $Y$ has a neighbor in $C \setminus X$. This neighbor is at distance 2 from $u$ so by Lemma \ref{lem:at most 2 common neighbors}, no three vertices from $Y$ can have a common neighbor in $C \setminus X$, thus $|C| \geq |X| + \ceil{\frac{|Y|}{2}}$. Simplifying this inequality gives
    $$\frac{d-1}{2} \geq |C| \geq |X| + \frac{|Y|}{2} \geq \frac{2 |X| + |Y|}{2} \geq \frac{|X| + d}{2},$$
    which implies $-1 \geq |X|$, a contradiction as $|X| \geq 0$. Thus, we can conclude that $C$ is never a dominating set of $G$ and the robber can always start the game on a vertex that is at distance at least 2 from all cops. 
  
    Suppose that after $m$ rounds, the robber is at a distance of at least 2 from each cop. By Lemma \ref{lem:at most 2 common neighbors}, each cop has at most two common neighbors with the robber, thus at most $2k$ neighbors of the robber are adjacent to some cop. As $2k \leq d-1$, there is at least one vertex that the robber can move to and stay at distance at least 2 from all cops even after they move in the next round.
\end{proof}

This bound cannot be improved in general. If $n$ is odd, $c(Q_n) = \ceil{\frac{n+1}{2}} = \ceil{\frac{\delta(Q_n)}{2}}$. Additionally, if $T$ is a tree, then $c(T) = 1 = \ceil{\frac{\delta(T)}{2}}$.

As $\delta(\Gamma_n) = \delta(\Lambda_n) = \floor{\frac{n+2}{3}}$ \cite{book} we immediately obtain the following.

\begin{corollary}
    \label{cor:fib-lower}
    If $n \geq 1$, then $c(\Gamma_n) \geq \floor{\frac{n+5}{6}}$ and $c(\Lambda_n) \geq \floor{\frac{n+5}{6}}$.
\end{corollary}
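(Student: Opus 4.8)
The statement to prove is Corollary~\ref{cor:fib-lower}, which bounds the cop number of Fibonacci and Lucas cubes below by $\lfloor \frac{n+5}{6} \rfloor$.

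The plan is to combine Theorem~\ref{thm:lower} with the known value of the minimum degree of $\Gamma_n$ and $\Lambda_n$. First, since both $\Gamma_n$ and $\Lambda_n$ are partial cubes (being induced subgraphs of $Q_n$ obtained by forbidding certain patterns, and well known to be isometric — indeed $\Gamma_n$ is even a median graph), Theorem~\ref{thm:lower} applies directly, giving $c(\Gamma_n) \geq \lceil \delta(\Gamma_n)/2 \rceil$ and likewise for $\Lambda_n$. Next I would invoke the cited fact \cite{book} that $\delta(\Gamma_n) = \delta(\Lambda_n) = \lfloor \frac{n+2}{3} \rfloor$, so that both cop numbers are at least $\left\lceil \frac{1}{2}\left\lfloor \frac{n+2}{3} \right\rfloor \right\rceil$.

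The only remaining work is the purely arithmetic identity $\left\lceil \tfrac{1}{2}\left\lfloor \tfrac{n+2}{3} \right\rfloor \right\rceil = \left\lfloor \tfrac{n+5}{6} \right\rfloor$, which I would verify by a case analysis on the residue of $n$ modulo $6$. Writing $n = 6q + r$ with $r \in \{0,1,2,3,4,5\}$, one computes $\lfloor \frac{n+2}{3} \rfloor$ in each case (it equals $2q$, $2q+1$, $2q+1$, $2q+1$, $2q+2$, $2q+2$ respectively), applies the outer ceiling of half of that, and checks this agrees with $\lfloor \frac{6q+r+5}{6} \rfloor = q + \lfloor \frac{r+5}{6} \rfloor$ in each case; both sides come out to $q, q+1, q+1, q+1, q+1, q+1$ respectively. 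This is routine and I would present it compactly, perhaps just noting the six residues.

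There is essentially no obstacle here: the corollary is an immediate consequence of the theorem once one substitutes the minimum-degree formula and simplifies. The one point worth stating explicitly for the reader is that $\Gamma_n$ and $\Lambda_n$ genuinely are partial cubes so that Theorem~\ref{thm:lower} is applicable — this is standard (see \cite{book}) but deserves a one-line mention. The hypothesis $n \geq 1$ is just to ensure the graphs and the formula for $\delta$ are well defined in the edge cases.
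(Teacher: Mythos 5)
Your proposal is correct and matches the paper's argument: the corollary is obtained exactly by applying Theorem~\ref{thm:lower} with the cited fact $\delta(\Gamma_n)=\delta(\Lambda_n)=\lfloor\frac{n+2}{3}\rfloor$ and simplifying. The paper states this even more tersely (without writing out the mod-$6$ arithmetic), so your residue check and the remark that $\Gamma_n,\Lambda_n$ are partial cubes are just slightly more explicit versions of the same proof.
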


\section{Upper Bound}
\label{sec:upper}

For every partial cube (and thus also for every median graph) $G$ there exists the smallest integer $n$ such that $G$ can be embedded into $Q_n$ as an isometric subgraph. Moreover, median graphs are retracts of hypercubes \cite{bandelt-2008}. Thus, as $c(Q_n) \leq \ceil{\frac{n+1}{2}}$ \cite{maamoun-1987} and $c(H) \leq c(G)$ for every retract $H$ of $G$ \cite{berarducci-1993}, we obtain the following.

\begin{corollary}
    \label{cor:hypercube}
    If $G$ is median graph that isometrically embeds into $Q_n$, then $c(G) \leq \ceil{\frac{n+1}{2}}$.
\end{corollary}

As hypercubes are median graphs, the above bound cannot be improved for general median graphs. For Fibonacci cubes, however, we are able to significantly improve this upper bound. It is easy to see that $c(\Gamma_0) = c(\Gamma_1) = c(\Gamma_2) = 1$ and $c(\Gamma_3) = c(\Gamma_4) = c(\Gamma_5) = 2$. See Figure \ref{fig:fib-cubes}. 

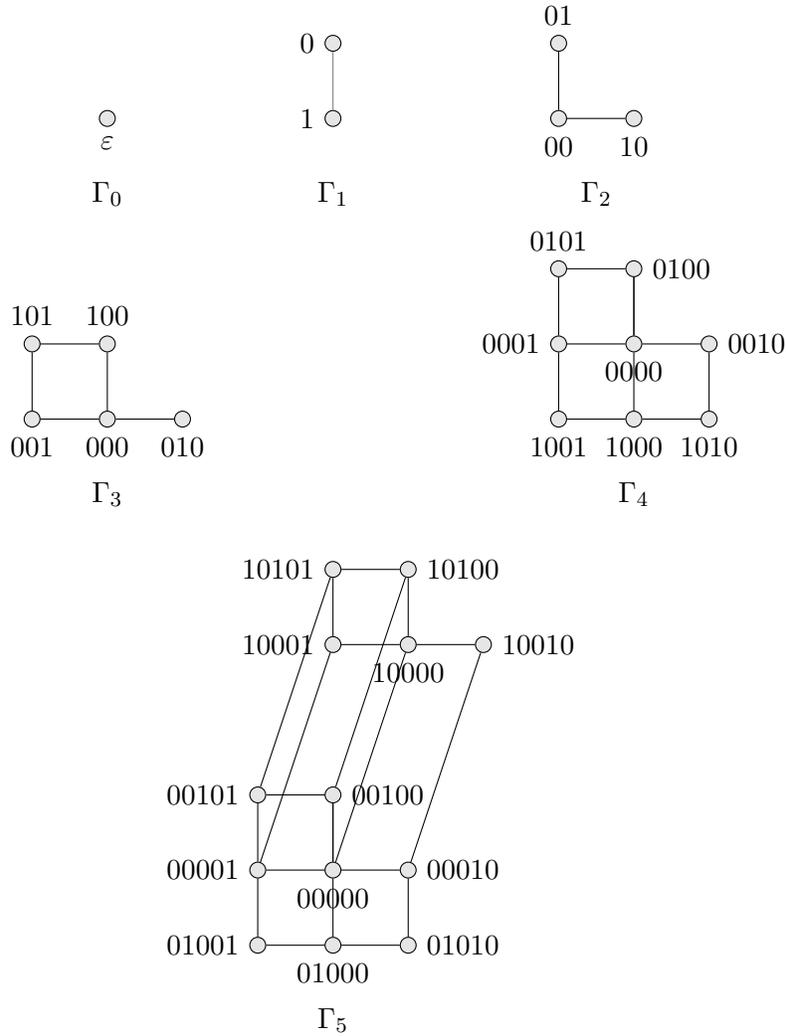
\begin{figure}[ht!]
    \centering
    \begin{tikzpicture}[
        scale=1,
        vert/.style={circle, draw, fill=black!10, inner sep=0pt, minimum width=6pt},
    ]

    \begin{scope}[xshift=0cm, yshift=0cm]
        \node[vert, label=below:$\varepsilon$] (e0) at (0,0) {};
        \node at (0,-1) {$\Gamma_0$};
    \end{scope}

    \begin{scope}[xshift=3cm, yshift=.5cm]
        \node[vert, label=left:$0$] (0) at (0,0.5) {};
        \node[vert, label=left:$1$] (1) at (0,-0.5) {};
        \draw[gray] (0) -- (1);
        \node at (0,-1.5) {$\Gamma_1$};
    \end{scope}

    \begin{scope}[xshift=6cm, yshift=0cm]
        \node[vert, label=below:$00$] (00) at (0,0) {};
        \node[vert, label=above:$01$] (01) at (0,1) {};
        \node[vert, label=below:$10$] (10) at (1,0) {};
        \draw (00) -- (01);
        \draw (00) -- (10);
        \node at (0.5,-1) {$\Gamma_2$};
    \end{scope}

    \begin{scope}[xshift=0cm, yshift=-4cm]
        \node[vert, label=below:$000$] (000) at (0,0) {};
        \node[vert, label=above:$100$] (001) at (0,1) {};
        \node[vert, label=below:$010$] (010) at (1,0) {};
        \node[vert, label=below:$001$] (100) at (-1, 0) {};
        \node[vert, label=above:$101$] (101) at (-1, 1) {};

        \draw (000) -- (001);
        \draw (000) -- (010);
        \draw (000) -- (100);
        \draw (001) -- (101);
        \draw (100) -- (101);

        \node at (0,-1) {$\Gamma_3$};
    \end{scope}

  \begin{scope}[xshift=7cm, yshift=-3cm]
        \node[vert, label=below:$0000$] (0000) at (0,0) {};
        \node[vert, label=below:$1000$] (0001) at (0,-1) {};
        \node[vert, label=right:$0100$] (0010) at (0,1) {};
        \node[vert, label=right:$0010$] (0100) at (1,0) {};
        \node[vert, label=below:$1010$] (0101) at (1,-1) {};
        \node[vert, label=left:$0001$] (1000) at (-1,0) {};
        \node[vert, label=below:$1001$] (1001) at (-1,-1) {}
            edge (1000);
        \node[vert, label=above:$0101$] (1010) at (-1,1) {};

        \draw (0000) -- (0001);
        \draw (0000) -- (0010);
        \draw (0000) -- (1000);
        \draw (0001) -- (1001);
        \draw (0010) -- (0000);
        \draw (0010) -- (1010);
        \draw (0100) -- (0000);
        \draw (0100) -- (0101);
        \draw (0101) -- (0001);
        \draw (1010) -- (1000);

        \node at (0,-2) {$\Gamma_4$};
    \end{scope}

 \begin{scope}[xshift=3cm, yshift=-10cm]
        \node[vert, label=below:$00000$] (0000) at (0,0) {};
        \node[vert, label=below:$01000$] (0001) at (0,-1) {};
        \node[vert, label=right:$00100$] (0010) at (0,1) {};
        \node[vert, label=right:$00010$] (0100) at (1,0) {};
        \node[vert, label=right:$01010$] (0101) at (1,-1) {};
        \node[vert, label=left:$00001$] (1000) at (-1,0) {};
        \node[vert, label=left:$01001$] (1001) at (-1,-1) {};
        \node[vert, label=left:$00101$] (1010) at (-1,1) {};
        
        \node[vert, label=below:$10000$] (000) at (1,3) {};
        \node[vert, label=right:$10100$] (001) at (1,4) {};
        \node[vert, label=right:$10010$] (010) at (2,3) {};
        \node[vert, label=left:$10001$] (100) at (0,3) {};
        \node[vert, label=left:$10101$] (101) at (0,4) {};

        \draw (0000) -- (0001);
        \draw (0000) -- (0010);
        \draw (0000) -- (1000);
        \draw (0001) -- (1001);
        \draw (0010) -- (0000);
        \draw (0010) -- (1010);
        \draw (0100) -- (0000);
        \draw (0100) -- (0101);
        \draw (0101) -- (0001);
        \draw (1010) -- (1000);
        \draw (1001) -- (1000);

        \draw (000) -- (001);
        \draw (000) -- (010);
        \draw (000) -- (100);
        \draw (001) -- (101);
        \draw (100) -- (101);

        \draw (0000) -- (000);
        \draw (0010) -- (001);
        \draw (1000) -- (100);
        \draw (1010) -- (101);
        \draw (0100) -- (010);

        \node at (0,-2) {$\Gamma_5$};        
\end{scope}

    \end{tikzpicture}
    \caption{Fibonacci cubes $\Gamma_n$ for $n \in \{0,1,2,3,4,5\}$. Each vertex is a binary string with no two consecutive $1$'s.}
    \label{fig:fib-cubes}
\end{figure}

\begin{theorem}
    \label{thm:n/3}
    If $n \geq 6$, then $c(\Gamma_n)\leq \lceil \frac{n}{3}\rceil$.
\end{theorem}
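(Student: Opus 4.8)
The plan is to prove $c(\Gamma_n)\le\lceil n/3\rceil$ by induction on $n$, the heart of which is the reduction
$$c(\Gamma_n)\ \le\ c(\Gamma_{n-3})+1 \qquad\text{for all } n\ge 7.$$
Since $\lceil n/3\rceil=\lceil (n-3)/3\rceil+1$, this reduction together with the base values $c(\Gamma_4)=c(\Gamma_5)=2$ recorded above, and a direct check that $c(\Gamma_6)\le 2$, gives the theorem (the recursion for $n\ge 7$ reduces down to $\Gamma_4,\Gamma_5,\Gamma_6$). The case $n=6$ must be handled by hand: neither the reduction (which would only give $c(\Gamma_6)\le c(\Gamma_3)+1=3$) nor Corollary~\ref{cor:hypercube} is strong enough, so I would give an explicit two‑cop strategy on the $21$-vertex graph $\Gamma_6$, for instance using the splitting $\Gamma_6=W_1^0\sqcup W_1^1$ with $W_1^0\cong\Gamma_5$, $W_1^1\cong\Gamma_4$, and a finite, somewhat case‑heavy analysis.

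For the reduction I would use the isometric embedding $\Gamma_n\hookrightarrow\Gamma_{n-3}\cp\Gamma_3$ obtained by writing a Fibonacci word $x$ as $(x',x'')$ with $x'=x_1\cdots x_{n-3}$ and $x''=x_{n-2}x_{n-1}x_n$; the only obstruction to this being the full product is the \emph{junction constraint} that $x'$ ending in $1$ forces $x''$ to begin with $0$. The strategy: $c(\Gamma_{n-3})$ cops play a winning $\Gamma_{n-3}$-strategy against the ``deep shadow'' $x'=\rho(r)$ of the robber while keeping their own last three coordinates frozen at $000$. This keeps their positions inside $\Gamma_n$, makes every $\Gamma_{n-3}$-move liftable regardless of the junction constraint, and realizes $\rho$ as an honest retraction onto a copy of $\Gamma_{n-3}$; since $\rho(r)$ moves at most one step per round and may ``pass'', the standard lazy‑robber argument shows these cops eventually pin — and thereafter escort — the deep shadow. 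Once the deep shadow is pinned, the robber is confined to the $\Gamma_n$-fiber over its current $x'$, a graph on at most five vertices isometric either to $\Gamma_3$ (if $x'$ ends in $0$) or to a $P_3$ (if $x'$ ends in $1$); the extra cop, together with the escort and the remaining main cops, then captures the robber inside this fiber. Because $c(\Gamma_3)=2$ and $n\ge 7$ gives $c(\Gamma_{n-3})+1\ge 3$, enough cops are available for the fiber.

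The delicate point — presumably where the ``more elaborate techniques'' mentioned in the introduction enter — is that the robber is not forced to stay in one fiber: it may alternate indefinitely between wandering in the deep coordinates (which moves the fiber and leaves the escort one step stale) and wandering inside the current fiber, so the cops must make simultaneous monotone progress against both kinds of motion, all while respecting the junction constraint, which forbids certain deep/fiber move combinations (in particular a cop sitting at $x''\in\{100,101\}$ cannot mirror a flip of coordinate $n-3$, and symmetrically the robber cannot make that flip from such a position). Making this airtight — deciding which cops also ``lag‑shadow'' the fiber coordinate, arguing that the finishing cops genuinely reach the moving fiber rather than merely one escort, and ruling out a robber strategy that exploits the junction to evade forever — is the main obstacle; once the invariant ``one cop escorts $x'$, two more are pinned in the robber's fiber, and the robber cannot leave it'' is established, the remaining bookkeeping (the induction, the base cases $\Gamma_4,\Gamma_5$, and the tiny game inside a $\le 5$-vertex fiber) is routine.
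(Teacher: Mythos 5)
Your outline leaves its central claim unproved, and you say so yourself. The whole induction rests on the reduction $c(\Gamma_n)\le c(\Gamma_{n-3})+1$, and the argument you sketch for it breaks exactly where you flag it: the escort strategy only controls the robber's projection to $\Gamma_{n-3}$, so the assertion ``once the deep shadow is pinned, the robber is confined to the fiber over its current $x'$'' is simply false --- the robber may keep changing $x'$, and the escort merely re-tracks with a one-step lag. The cops who are supposed to finish the game inside the fiber occupy nonzero fiber values such as $100$ or $101$, hence (by the junction constraint) cannot always mirror a deep flip of coordinate $n-3$, so their progress in the fiber can be undone each time the robber alternates between deep moves and fiber moves. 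You explicitly list ruling out such an evasion strategy as ``the main obstacle,'' but no strategy or invariant is supplied that makes simultaneous progress against both kinds of motion; that is precisely the content a proof of the reduction would need, so what you have is a plan, not a proof. In addition, the base case $c(\Gamma_6)\le 2$ is only asserted (``a finite, somewhat case-heavy analysis''), not carried out, and the theorem's statement includes $n=6$.

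For comparison, the paper avoids the moving-fiber problem altogether: it does not induct, but lets all $k=\lceil n/3\rceil$ cops simultaneously shadow pairwise disjoint length-$3$ blocks of the string, with cop $c_i$ matching the robber on block $B_i$ and all other coordinates held at $0$ (which keeps every cop move legal in $\Gamma_n$). The engine is the local fact that in $\Gamma_3$ the vertex $000$ is within distance $1$ of every vertex except $101$: whenever the robber moves on some block, every cop not yet matched on its own block closes in there, the one bad configuration (cop on $000$, robber on $101$ inside a block) is defused by an explicit two-step maneuver through $001$, and a robber who moves forever inside a single block is caught because two cops can win on that $\Gamma_3$ while the others hold their matches. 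The ``simultaneous monotone progress'' argument you defer is exactly this phase analysis; if you want to salvage your inductive route, you would have to prove the reduction $c(\Gamma_n)\le c(\Gamma_{n-3})+1$ with a comparably explicit mechanism for a robber who shuttles between the deep coordinates and the fiber, and separately verify $c(\Gamma_6)\le 2$.
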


\begin{proof}
    Let $k=\lceil \frac{n}{3}\rceil$. We provide a winning strategy for $k$ cops $c_1, \ldots, c_k$. Partition each binary string of length $n$ into $k$ blocks $B_1, \ldots, B_k$ of length 3 except $B_k$ which is of length $n - 3(k-1) \in \{1,2,3\}$. More precisely, the blocks of the string $x_1 \ldots x_n$ are $B_i = x_{3i-2} x_{3i-1} x_{3i}$ for $i \in [k-1]$ and $B_k \in \{x_n, x_{n-1} x_n, x_{n-2} x_{n-1} x_n\}$. In our strategy, which consists of different phases, cop $c_i$ is associated with block $B_i$, and all cops start in $0^n$. If cop $c_i$ and the robber have the same value on $B_j$ we say that $c_i$ matches the robber on $B_j$.

    Before presenting the strategy, observe that vertices with all possible values of $B_i$ where values on other blocks are fixed induce a subgraph of $\Gamma_n$ that is isomorphic to $\Gamma_3$ or an induced subgraph of $\Gamma_3$, and that $000$ is at distance at most 2 from all other possible values of $B_i$ and at distance exactly 2 only from $101$. When describing the cop's strategy to match the robber on some $B_i$ we will say that the cop moves closer to the robber on $B_i$, which means that the cop moves closer to the robber in the corresponding $\Gamma_3$. Saying that a player moves on $B_i$ will mean that they change a bit from $B_i$.

    \begin{description}
        \item[Phase 1.] Until it holds for every $i \in [k]$ except maybe one that $c_i$ matches the robber on $B_i$.\\
        The strategy for cop $c_i$ during Phase 1 is to only change bits in $B_i$, aiming to match the robber there (the remaining bits stay $0$). Once they match, the cop can maintain this property by moving on $B_i$ if and only if the robber moves on $B_i$, and not moving otherwise. If the robber moves on $B_j$, $j \neq i$, then the cop $c_i$ moves closer to the robber on $B_i$ (decreasing the distance between them by 1). If the robber moves on $B_i$, $c_i$ moves such that the distance between the cop and the robber does not increase. If the robber does not move on $B_i$ for at least two rounds, $c_i$ can match the robber on $B_i$. Since $k \geq 3$, there is at most one block $B_i$ on which $c_i$ is not able to match the robber. Without loss of generality, let this be the block $B_k$.

        \item[Phase 2.] For every $i \in [k]$, the cop $c_i$ matches the robber on $B_i$.
        \begin{description}
            \item[Phase 2.1.] The game never reaches the condition from Phase 2.\\
            By the strategy from Phase 1, this is only possible if the robber always moves on $B_k$ (except maybe once). Thus, cops $c_1, \ldots, c_{k-1}$ can match the robber on $B_1$ by moving closer to the robber there. Then they can also match the robber on $B_2$, etc., until they all match the robber on blocks $B_1, \ldots, B_{k-1}$. Note that by matching the robber on blocks sequentially, we avoid the possible problem of the cop wanting to move to a vertex outside of $V(\Gamma_n)$. As $k \geq 3$, we thus have at least two cops that match the robber everywhere except on $B_k$. The final part of their strategy is to imagine the game on $\Gamma_{|B_k|}$ according to the robber's moves on $B_k$ and as $c(\Gamma_m) \leq 2$ for $m \in [3]$, these two cops can win the game. Once they do, the robber is caught on $\Gamma_n$ too.

            Note that during this phase, cop $c_i$ always matches the robber on $B_i$ for every $i \in [k-1]$. Observe also that if on some $B_j$ cop $c_i$ is at distance 2 from the robber anytime during Phase 2.1, then $c_i$ is on $000$ and the robber is on $101$. 

            \item[Phase 2.2.] After the condition from Phase 2 is established.\\
            Suppose the robber moves on $B_j$. If the cop $c_i$ matches the robber on $B_j$, then $c_i$ simply moves on $B_j$ as well, so that he still matches the robber. If $c_i$ does not yet match the robber on $B_j$, then we distinguish between the following cases.
            \begin{itemize}
                \item If robber is at distance 1 from $c_i$ on $B_j$, $c_i$ moves such that he matches the robber on $B_j$ after this move.
                \item If robber is at distance $2$ from $c_i$ on $B_j$, then as observed in Phase 2.1., $c_i$ is on $000$ and robber is on $101$. Cop's strategy is to move to $001$. As on $B_{j+1}$, $c_i$ either matches the robber (who is on $101$ on $B_j$) or is on $000$, this is a legal move in $V(\Gamma_n)$. If in the next round, the robber moves on $B_j$ again and does not match $c_i$, then $c_i$ returns to $000$. (To avoid forcing the cop to move outside of $V(\Gamma_n)$ when trying to match the robber on $B_{j+1}$.) Otherwise, if the robber moves on some $B_\ell$, $\ell \neq j$, then $c_i$ can match the robber on $B_j$ after his move. 
            \end{itemize}
            This strategy ensures that if the robber moves on some $B_j$ and then moves on a different block in the next round (or does not move at all), all cops match the robber on $B_j$ after these two rounds. So, unless the robber is moving only on one block for the rest of the game, he is caught in a finite number of rounds. If the robber keeps moving on only one block, say on $B_\ell$, then first wait for all cops except $c_\ell$ to match the robber on blocks $B_i$, $i \in [k] \setminus \{\ell\}$. Afterwards, as $\Gamma_3$ has only one vertex at a distance 2 from $000$ and the robber is always moving on $B_\ell$, either in this or the next round, the robber is on $001$ or $100$, so all cops but $c_\ell$ now also match the robber on $B_\ell$, thus, the robber is caught. \hfill \qedhere
        \end{description}
    \end{description}
\end{proof}

Using the same methods, we obtain an even stronger upper bound. Note that as the diameter of $\Gamma_5$ is 3, this method cannot be used further.

\begin{theorem}
    \label{thm:n/4}
    If $n \geq 9$, then $c(\Gamma_n)\leq \lceil \frac{n}{4}\rceil$.
\end{theorem}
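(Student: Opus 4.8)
The plan is to replay the block-decomposition argument of Theorem~\ref{thm:n/3} with blocks of length $4$ instead of $3$. Put $k=\lceil n/4\rceil$; the hypothesis $n\ge 9$ guarantees $k\ge 3$, which is exactly what the argument needs. Partition each length-$n$ string into blocks $B_1,\dots,B_k$ with $B_i$ of length $4$ for $i<k$ and $B_k$ of length $n-4(k-1)\in\{1,2,3,4\}$, assign cop $c_i$ to block $B_i$, and start all cops at $0^n$. As before, say $c_i$ \emph{matches} the robber on $B_j$ if the two strings agree there, and ``moving closer on $B_j$'' refers to the corresponding block-graph.

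The $\Gamma_3$-facts used in Theorem~\ref{thm:n/3} get replaced by the following. Fixing all blocks but $B_i$ induces a copy of $\Gamma_4$ (or an induced subgraph of it); in $\Gamma_4$ one has $d(0^4,x)=\mathrm{wt}(x)$, and the maximum weight of a length-$4$ string with no ``$11$'' is $2$, so $0^4$ has eccentricity $2$; the vertices at distance exactly $2$ from $0^4$ are precisely $1010$, $1001$, $0101$; and each of those three has both of its $\Gamma_4$-neighbours of weight $1$. Finally, $c(\Gamma_m)\le 2$ for all $m\le 4$, which is what the endgames require.

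With these substitutions the phase structure is the same as in Theorem~\ref{thm:n/3}. In Phase~1, cop $c_i$ only edits $B_i$, moving closer there whenever the robber moves on another block and locking in once matched; since the robber edits one block per round he can keep at most one cop — say $c_k$ — from matching, leaving $k-1\ge 2$ cops that match everywhere except $B_k$. If Phase~2's condition is never reached (Phase~2.1), the robber is forced to keep editing $B_k$; the $\ge 2$ free cops then match every other block one block at a time (to avoid stepping outside $V(\Gamma_n)$) and finish by simulating a $2$-cop winning strategy on the block-graph of $B_k$, an induced subgraph of $\Gamma_{|B_k|}$ with $|B_k|\le 4$ and hence cop number $\le 2$. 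In Phase~2.2, whenever the robber edits $B_j$ the cops matching there re-match in one step, while a lagging cop $c_i$ is at distance $1$ or $2$ from the robber on $B_j$; as in Theorem~\ref{thm:n/3}, the distance-$2$ case occurs only with $c_i$ at $0^4$ on $B_j$ and the robber at one of $1010,1001,0101$, and then $c_i$ steps onto a weight-$1$ neighbour of the robber and, if the robber edits $B_j$ again without matching, bounces back to $0^4$. Exactly as before, either the robber eventually edits two different blocks on two consecutive rounds — and then all cops re-match on the block in question, making progress — or he edits a single block $B_\ell$ forever, in which case the $k-1\ge 2$ free cops sit at $0^4$ on $B_\ell$ and, because a weight-$2$ vertex of $\Gamma_4$ has only weight-$1$ neighbours, on the cops' turn the robber is within distance $1$ of $0^4$ on $B_\ell$ at least every other round, so the cops step onto him.

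The one place where $\Gamma_4$ genuinely differs from $\Gamma_3$, and hence the main obstacle, is this distance-$2$ recovery: for $\Gamma_3$ there is a single distance-$2$ vertex ($101$) and one escape move, whereas $\Gamma_4$ has three such vertices, so one must check case by case that the lagging cop has a legal escape onto a robber-neighbour keeping the whole string free of ``$11$''. Against a robber at $1010$ the cop flips the third (interior) bit of $B_j$, reaching $0010$; against $0101$ it flips the second (interior) bit, reaching $0100$ — both moves are legal regardless of the neighbouring blocks. Against $1001$ no interior flip lands on a neighbour of the robber, so the cop flips the last bit of $B_j$ to reach $0001$ and invokes the boundary argument of Theorem~\ref{thm:n/3}: $c_i$ already matches the robber on $B_{j+1}$ (or sits at $0^4$ there), so the first bit of $B_{j+1}$ is $0$ and no ``$11$'' is created — and if $j=k$ there is no such boundary. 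In every case the reverse move back to $0^4$ is the unflip of an interior or last bit and is always legal. The remaining verifications are the same bookkeeping already present in Theorem~\ref{thm:n/3}: that a lagging cop is always at $0^4$ on the relevant block when at distance $2$, that Phase~2.1's sequential matching never forces a cop outside $V(\Gamma_n)$, and that $|B_k|$ being as large as $4$ is harmless since $c(\Gamma_4)\le 2$. It is also worth remarking why the method stops here: $0^5$ has eccentricity $3$ in $\Gamma_5$, so a length-$5$ block would let the robber sit at distance $3$ from a lagging cop's home vertex, and the one-move recovery breaks down.
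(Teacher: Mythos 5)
Your proposal is correct and follows essentially the same block-decomposition strategy as the paper's proof: length-4 blocks, one cop per block, the same three phases, and the same distance-2 recovery plus bounce-back-to-$0000$ rule. The only difference is the cosmetic choice of recovery vertices (you use $0010$, $0100$, $0001$ where the paper uses $1000$ and $0001$), which does not change the argument.
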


\begin{proof}
    The idea of the proof is the same as to prove Theorem \ref{thm:n/3}. Now we play with $k = \lceil \frac{n}{4}\rceil$ cops, and each binary string of length $n$ is divided into $k$ blocks $B_1, \ldots, B_k$, each of length 4 (except possibly $B_k$, which could be shorter). 

    Now the vertices with all possible values on $B_i$ but with other values fixed induce $\Gamma_4$ (or an induced subgraph of $\Gamma_4$ in the case $i=k$). The vertex $0000$ is at distance at most 2 from all other vertices of $\Gamma_4$ and at distance exactly 2 only from $0101, 1001, 1010$. 

    Phase 1 is the same as in the proof of Theorem \ref{thm:n/3}. In Phase 2.1, the only difference is that $c(\Gamma_m) \leq 2$ for all $m \in [4]$. Additionally, during this phase, when trying to match the robber on some $B_i$, the cops always return back to $0000$ if they are not able to match the robber within two rounds. Thus, if on some $B_j$ cop $c_i$ is at distance 2 from the robber, then $c_i$ is on $0000$.

    In Phase 2.2, if the robber is at distance 2 from $c_i$ on $B_j$, then $c_i$ moves to $0001$ if the robber is on $0101$ or $1001$, and to $1000$ if the robber is on $1010$. If he cannot match the robber in the next round, he returns to $0000$. Unless the robber keeps moving only on one block for the rest of the game, he is again caught. Otherwise, the robber is always moving on some $B_\ell$, thus at least every second round he is at a distance at most 1 from $0000$, so cops can win again.
\end{proof}

An important note is that Lucas cubes are retractions of Fibonacci cubes \cite{klavzar-2005} and as a consequence the upper bounds in Theorems \ref{thm:n/3} and \ref{thm:n/4} for Fibonacci cubes are also upper bounds for Lucas cubes. 

\begin{corollary}
    \label{cor:lucas}
    If $n \geq 9$, then $c(\Lambda_n)\leq \lceil \frac{n}{4}\rceil$.
\end{corollary}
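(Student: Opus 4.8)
\textbf{Proof proposal for Corollary \ref{cor:lucas}.}
The plan is to deduce the bound for Lucas cubes directly from the corresponding bound for Fibonacci cubes by exploiting the retraction structure, with essentially no new combinatorial work. The two ingredients are already available: Theorem \ref{thm:n/4} gives $c(\Gamma_n) \leq \lceil \frac{n}{4} \rceil$ for $n \geq 9$, and the retract-monotonicity of the cop number (used earlier via \cite{berarducci-1993} in the derivation of Corollary \ref{cor:hypercube}) states that if $H$ is a retract of $G$ then $c(H) \leq c(G)$.

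First I would invoke the known fact from \cite{klavzar-2005} that $\Lambda_n$ is a retract of $\Gamma_n$; more precisely, there is a retraction $r : V(\Gamma_n) \to V(\Lambda_n)$ fixing $\Lambda_n$ pointwise and mapping edges to edges or to single vertices. For completeness one could sketch why: a string $x = x_1 \cdots x_n \in V(\Gamma_n)$ that starts and ends with $1$ is mapped by flipping $x_1$ (say) to $0$, and strings already in $V(\Lambda_n)$ are fixed; one checks this is a graph homomorphism that does not increase distances. This step is not an obstacle since it is a cited, standard structural property of these graphs.

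Then I would combine the two facts: $c(\Lambda_n) \leq c(\Gamma_n)$ because $\Lambda_n$ is a retract of $\Gamma_n$, and $c(\Gamma_n) \leq \lceil \frac{n}{4} \rceil$ for $n \geq 9$ by Theorem \ref{thm:n/4}; chaining these inequalities yields $c(\Lambda_n) \leq \lceil \frac{n}{4} \rceil$, which is exactly the claim. The only point requiring any care is the hypothesis $n \geq 9$, which is inherited verbatim from Theorem \ref{thm:n/4} and hence needs no separate justification. I do not anticipate a genuine obstacle here; the work was all done in establishing Theorem \ref{thm:n/4}, and the corollary is a one-line transfer via retractions.
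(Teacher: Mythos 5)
Your proposal is correct and is essentially the paper's own argument: $\Lambda_n$ is a retract of $\Gamma_n$ by \cite{klavzar-2005}, the cop number is monotone under retractions \cite{berarducci-1993}, and Theorem \ref{thm:n/4} supplies $c(\Gamma_n)\leq\lceil\frac{n}{4}\rceil$ for $n\geq 9$. One small caution: your optional sketch of the retraction (flipping the first bit of strings that start and end with $1$) is not actually a graph homomorphism --- an edge of $\Gamma_n$ obtained by changing the \emph{last} bit of such a string is sent to two strings differing in two positions --- so the argument should rest on the cited retraction, which is all your main chain of inequalities uses anyway.
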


It can also be checked that $c(\Lambda_2) = c(\Lambda_3) = 2$, $c(\Lambda_4) = c(\Lambda_5) = c(\Lambda_6) = c(\Lambda_7) = 2$ and $c(\Lambda_8) = 3$.

\section{Further directions}

In this paper, we prove that the cop number of a partial cube $G$ is bounded from below by $\lceil \frac{\delta(G)}{2} \rceil$ and that the cop number of a median graph embedded in $Q_n$ is bounded from above by $\lceil \frac{n+1}{2} \rceil$. However, we believe this result generalizes to all partial cubes.

\begin{conjecture}
    If $G$ is a partial cube that isometrically embeds into $Q_n$, then $c(G) \leq \left \lceil \frac{n+1}{2} \right \rceil$.
\end{conjecture}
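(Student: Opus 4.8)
The bound for median graphs is immediate from Corollary~\ref{cor:hypercube} together with the fact that median graphs are retracts of hypercubes, so the content of the conjecture is to delete the word ``median''. The obstruction to a routine generalisation is that a partial cube need not be a retract of any hypercube, so there is no retraction $r\colon Q_n\to G$ along which to pull back the known strategy for $Q_n$; one must play directly on $G$. The plan is to adapt the pairing strategy witnessing $c(Q_n)\le\ceil{\frac{n+1}{2}}$~\cite{maamoun-1987}. Fix an isometric embedding $G\hookrightarrow Q_n$, so that the $n$ coordinate directions correspond to the $\Theta$-classes of $G$, and partition these $n$ coordinates into $\ceil{n/2}$ pairs $P_1,\dots,P_{\ceil{n/2}}$ (one of them a singleton when $n$ is odd). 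Deploy $\ceil{\frac{n+1}{2}}$ cops: one cop $c_j$ dedicated to each pair $P_j$, plus one additional ``roaming'' cop when $n$ is even.

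The objective of $c_j$ is to agree with the robber on both coordinates of $P_j$ and to preserve this agreement once it is achieved; when every $c_j$ has done so the cops collectively agree with the robber on all $n$ coordinates, so each $c_j$ in fact occupies the robber's vertex and the robber is caught. Convexity should be the main tool. Each coordinate's two halfspaces are convex, so a pair $P_j$ partitions $V(G)$ into at most four convex ``quadrants'', and the map sending a vertex to its quadrant carries adjacent vertices either to the same quadrant or to quadrants adjacent in $Q_2=C_4$. I would first show that, playing greedily, a single cop can reach the robber's $P_j$-quadrant in finitely many rounds and then keep pace with it, \emph{provided} the robber does not spend almost all of its moves flipping coordinates of $P_j$. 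If the robber does concentrate its moves inside one pair $P_\ell$, then it is confined to a copy of $C_4$ (or a subgraph) in those two coordinates and makes no lasting change on any other coordinate, so every other $c_j$ can complete and maintain its match; at that point $c_\ell$ and the roaming cop are both free, and since $c(C_4)=2$ they finish the game. Assembling these cases — interleaving the ``establishing'' phases so that no cop is ever forced to vacate a pair it has already synchronised — would complete the proof, in the same spirit as the interleaving of Phases~1 and~2 in Theorems~\ref{thm:n/3} and~\ref{thm:n/4}.

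The main obstacle is exactly the difficulty that is handled \emph{ad hoc} for Fibonacci cubes in the proofs of Theorems~\ref{thm:n/3} and~\ref{thm:n/4}: in $Q_n$ a cop may flip any prescribed coordinate, whereas in a partial cube the vertex obtained from flipping coordinate $\ell$ need not lie in $G$, so a cop moving toward the robber can be forced to flip a coordinate it did not choose — possibly one belonging to a pair it had already matched, thereby undoing progress. Converting the informal ``greedy progress'' claims above into honest statements therefore requires either a potential function robust to such forced detours, or a convexity argument guaranteeing that a progress-making move into the robber's quadrant always exists inside $G$; this is the crux, and is what currently keeps the statement a conjecture. An alternative line of attack is induction along the expansion procedure that generates every partial cube from $K_1$: it would suffice to show that a single expansion raises the cop number by at most one and that consecutive expansions can be amortised in pairs, but transporting a cop strategy through an expansion appears to be at least as delicate as the direct approach.
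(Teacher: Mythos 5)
This statement is posed in the paper as an open conjecture; the paper contains no proof of it, so the only question is whether your argument closes it, and it does not. What you have written is a plan, and you yourself identify the unresolved crux: the claim that a cop dedicated to a coordinate pair $P_j$ can always make ``greedy progress'' toward the robber's quadrant, and can maintain a match once established, is exactly what fails to be automatic in a partial cube. A cop's current vertex need not have any neighbor in $G$ differing in a coordinate of $P_j$, so the desired move may simply not exist; worse, every move that decreases the cop's distance to the robber may flip a coordinate in some already-matched pair $P_i$, destroying the invariant on which the whole counting of $\left\lceil \frac{n+1}{2} \right\rceil$ cops rests. Convexity of halfspaces (and hence of your quadrants) gives you that the quadrant map behaves well on edges, but it gives no existence statement for a progress-making move inside $G$, and no potential function is supplied that is robust to forced detours. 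This is precisely the difficulty that the paper handles only \emph{ad hoc} for Fibonacci cubes in Theorems~\ref{thm:n/3} and~\ref{thm:n/4}, using the specific structure of $\Gamma_3$ and $\Gamma_4$ (the distance-$2$ analysis around $000$ and $0000$ and the explicit detour-and-return moves); nothing analogous is available for an arbitrary partial cube, and without it Phases of your strategy cannot be interleaved as claimed.

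Two further points need care even at the level of the outline. First, the endgame ``the robber concentrates its moves inside one pair $P_\ell$, hence is confined to a copy of $C_4$ and two free cops finish since $c(C_4)=2$'' presumes the two free cops can reach the robber's fiber and play inside it, which again requires moves that $G$ may not contain; in the Fibonacci-cube proofs this step is justified by an explicit argument about legal moves in $V(\Gamma_n)$, not by a generic principle. Second, your alternative route via Chepoi-style expansions would indeed suffice if a single expansion raised the cop number by at most one with pairwise amortisation, but no argument is offered for either claim, and transporting a strategy through an expansion is not obviously easier than the direct approach (the expanded graph is not in general a retract of the original, so the retract monotonicity used in Corollary~\ref{cor:hypercube} does not apply). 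As it stands, the statement remains a conjecture: your proposal is a reasonable research plan, not a proof.
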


However, even if the above conjecture holds, there is a gap between the lower and the upper bounds. Thus, it would be interesting to determine exact values or at least improve these bounds for famous partial cubes, for example, daisy cubes, Pell graphs, generalized Pell graphs, metallic cubes, Horadam cubes, Fibonacci $p$-cubes, and weighted Padovan graphs. As already shown in the case of Fibonacci cubes, this general upper bound can at least sometimes be improved. Note, however, that the same technique used in Theorems \ref{thm:n/3} and \ref{thm:n/4} cannot be applied to show that $c(\Gamma_n) \leq \lceil \frac{n}{5} \rceil$ as $\Gamma_5$ does not have the required property (a vertex $x \in V(\Gamma_5)$ such that $N_2[x] = V(\Gamma_5)$).

\section*{Acknowledgments}

V.I.C.\ acknowledges the financial support from the Slovenian Research and Innovation Agency (Z1-50003, P1-0297, N1-0218, N1-0285, N1-0355) and the European Union (ERC, KARST, 101071836).

\end{document}